
\documentclass[preprint,12pt]{elsarticle}




\usepackage{amssymb}
\usepackage{amsmath}
\usepackage{amsthm} 
\newtheorem{theorem}{Theorem}
\newtheorem{definition}[theorem]{Definition}
\usepackage{algorithm}
\usepackage{algpseudocode} 
\usepackage{hyperref}
\usepackage{caption}
\usepackage{subcaption}


\journal{Arxiv}

\begin{document}

\begin{frontmatter}



\title{Data-Driven Long-Term Asset Allocation with Tsallis Entropy Regularization} 

\author[label1]{Haoran Zhang}
\affiliation[label1]{organization={Department of Applied Mathematics, The Hong Kong Polytechnic University},
             country={Hong Kong}}
             
\author[label2]{Wenhao Zhang}
\affiliation[label2]{organization={Department of Applied Mathematics, The Hong Kong Polytechnic University},
             country={Hong Kong}}
 
\author[label3]{Xianping Wu}            
\affiliation[label3]{organization={School of School of Mathematics and Statistics, Guangdong University of Technology},
             city={GuangZhou},
             postcode={510520},
             state={Guangdong},
             country={China}}



\begin{abstract}
This paper addresses the problem of dynamic asset allocation under uncertainty, which can be formulated as a linear–quadratic (LQ) control problem with multiplicative noise. To handle exploration–exploitation trade-offs and induce sparse control actions, we introduce Tsallis entropy as a regularization term. We develop an entropy-regularized policy iteration scheme and provide theoretical guarantees for its convergence. For cases where system dynamics are unknown, we further propose a fully data-driven algorithm that estimates Q-functions using an instrumental-variable least-squares approach, allowing efficient and stable policy updates. Our framework connects entropy-regularized stochastic control with model-free reinforcement learning, offering new tools for intelligent decision-making in finance and automation.
\end{abstract}

\begin{keyword}
linear–quadratic control; Tsallis entropy; policy iteration; least-squares temporal difference
\end{keyword}

\end{frontmatter}



\section{Introduction}
The Linear-Quadratic (LQ) problem constitutes a foundational pillar of control theory. The primary aim is to identify an optimal control strategy identification for linear dynamics, whose corresponding cost function is minimum \cite{abeille2016lqg}. The optimal control input can be expressed functions as a feedback with respect to state, which comes from the algebraic Riccati equation (ARE). The LQ problem is unique among optimal control in that it has closed-form analytical feedback solution \cite{almgren2001optimal}. When the dynamics exhibit nonlinearity and are difficult to analyze, LQ approximation will be operated with local linearization.. This approximation is close to the original problem near state equilibrium \cite{benigno2012linear}. In recent years, the LQ problem has attracted substantial interest due to its versatile applications among multiple domains, including dynamic portfolio management and optimization \cite{li2002dynamic, cui2014unified, forouzanfar2024integrated}, financial derivative pricing \cite{primbs2009stochastic}, transfer learning \cite{cao2023feasibility}, resource extraction \cite{yang2015linear}, biomedical engineering \cite{chavez2005linear} and optimal liquidation \cite{almgren2001optimal}. 

However, despite its analytical elegance, the LQ framework presents several challenges in practice. First, ARE involves nonlinearity and is impossible to be directly solved unless it iterates with proper initialization, still less high-dimensional or time-varying systems. Second, classical LQ control assumes perfect knowledge of the system parameters. In real-world scenarios where the dynamics are unknown or noisy, this assumption is often violated. Model identification techniques may offer partial remedies, but they introduce additional uncertainty and reduce robustness—particularly in online or adaptive settings. These limitations underscore the need for data-driven approaches that can bypass the dependency on explicit models while still solving dynamic decision problems efficiently.

Reinforcement Learning (RL) is a core paradigm in machine learning that naturally addresses these limitations. RL focuses on how agents take actions in an environment to maximize cumulative rewards over time \cite{sutton2018reinforcement}. RL is distinguished by its unique ability to address sequential decision-making problems, which  system dynamics are often partially known or completely unknown. The framework of RL typically revolves around the Markov Decision Process (MDP), comprising states, actions, rewards, and a transition function, which collectively define the agent-environment interaction \cite{puterman2014markov}. A key advantage of RL is its flexibility: agents learn optimal policies through trial-and-error interactions, rather than explicit system models. This adaptability has enabled RL applications in diverse fields, from robotics and game playing to finance and control systems \cite{levine2020offline}.

Traditional LQ control yields deterministic policies, which can lead to poor exploration and instability in uncertain or data-driven environments. In RL, this limitation often causes premature policy convergence and inefficient learning. Entropy regularization mitigates this by encouraging exploration and smoothing policy updates, improving both robustness and convergence behavior \cite{cen2022fast}. In contrast to Shannon entropy, which yields softmax-type policies, Tsallis entropy generalizes Shannon entropy through a parameter $q \in \mathbb{R}$, which allows one to flexibly control the trade-off between sparsity and randomness in the learned policy. When $q=1$, Tsallis entropy reduces to the Shannon entropy; for $q<1$, it encourages sparsity, while $q>1$ results in more diffused stochasticity. This adaptability makes it especially suitable for control tasks under multiplicative noise, where a balance between exploration and structure is crucial. \\

\textbf{Our Contribution.} This paper investigates the integration of Tsallis entropy into the LQ control framework with multiplicative noise, and proposes a novel policy iteration method grounded in Q-functions. Tsallis entropy, as a one-parameter generalization of Shannon entropy, serves as a regularizer that balances exploration and control sparsity. We derive explicit solutions under this regularization, demonstrating improved robustness and adaptability in stochastic systems subject to multiplicative uncertainty.

For completely unknown dynamics, we develop a fully data-driven policy iteration algorithm tailored to LQ systems with multiplicative noise. Our approach estimates Q-functions through a least-squares formulation with instrumental variables, following principles akin to least-squares temporal difference (LSTD) learning. By incorporating Tsallis entropy into this reinforcement learning framework, our method achieves stable and efficient policy updates without requiring knowledge of system parameters.

The proposed framework closes an important gap in existing literature by extending entropy-regularized control to data-driven LQ settings with multiplicative noise. It provides a theoretically grounded and practically effective approach to entropy-based stochastic control in complex, uncertain environments.

\subsection{Related Work}
\noindent \textbf{LQ Formulation for the Mean-Variance Problem.} 
The exploration of LQ stochastic optimal control problems, especially those with multiplicative noise, has provided a robust framework for addressing complex decision-making scenarios in systems engineering and financial mathematics in \cite{cui2024limited}. One of key to this kernel is the application of LQ problem to dynamic Mean-Variance (MV) portfolio analysis, a sophisticated extension of Markowitz's classical static portfolio selection theory \cite{zhou2000continuous, cui2014unified}. These models, despiting the challenges posed by indefinite penalty matrices on control and state variables, have been shown to be well-posed under certain conditions, thereby offering a promising avenue for optimizing portfolio selections in a stochastic setting. Several researches have investigated the issues of estimating and controlling systems that have multiplicative noise \cite{basin2006optimal, wu2018explicit}. Research has mostly concentrated on the LQ formulation with indefinite penalty matrices for infinite horizon discrete-time models in stochastic optimal control problems with multiplicative noise of the LQ type \cite{mukaidani2017infinite}. It is worth mentioning that  models with multiplicative noise can still be considered well-posed under specific conditions even when the penalty matrices for both the state and control are not clearly defined. \\

\noindent \textbf{Reinforcement Learning in LQ Problem.} Within RL, two primary methodologies dominate: model-based methods, which attempt to explicitly estimate the underlying transition dynamics, and model-free methods, which directly learn policies or value functions without requiring transition kernel estimation \cite{jin2020analysis}. While model-based methods can converge faster given accurate models, their dependence on accurate system knowledge limits their applicability in uncertain or complex environments. Conversely, model-free methods, such as policy gradient and Q-learning algorithms, have gained prominence for their robustness in environments where the system dynamics are opaque or highly variable \cite{haarnoja2018soft, fan2024q}.

By integrating RL into the LQ framework, agents can learn optimal control policies directly from data. Model-free RL methods, in particular, have demonstrated considerable promise for solving LQ problems without requiring precise parameter knowledge. Methods such as policy gradient \cite{fazel2018global} and least-squares policy iteration \cite{hambly2021policy} have been adapted to discrete-time LQ settings, which achieves convergence guarantees under various assumptions. These methods leverage the trial-and-error nature of RL to iteratively refine control policies, which also address challenges such as state‑multiplicative noise and entropy regularization. The synergy between RL and LQ problem opens new avenues for solving practical control problems in engineering, robotics, and financial systems, whose analytical solutions are either infeasible or suboptimal in the presence of real-world complexities.\\ 

\noindent \textbf{Entropy Regularization.} The entropy regularization utilization is prevalent in the field of RL, as evidenced in the literature by \cite{sutton2018reinforcement, miao2024effective}. Entropy regularization is frequently employed to encourage exploration and improve convergence. Research has shown that incorporating entropy regularization leads to a smoother and more consistent landscape, allowing for the use of higher learning rates. As a result, this results in faster convergence during the training phase. The notion of entropy regularization is crucial for achieving a balance between exploration and exploitation in RL. This concept has been highlighted in the works of \cite{haarnoja2018soft}. Sometimes, the reward function may incorporate Tsallis entropy, as introduced by \cite{tsallis1988possible}, to represent the agent's action distribution. This approach generalizes Shannon entropy for non-extensive systems, where standard additivity is not applicable, including large-scale interaction, fractal structures, or ergodic dynamics. This generalization alters the optimal action distribution to be q-Gaussian rather than Gaussian, as established by the Shannon entropy in \cite{wang2020reinforcement}.  In \cite{bao2022sparse}, Tsallis entropy, derived from Tsallis statistical mechanics \cite{tsallis1988possible}, is employed to regularize optimal transport problems, yielding high-entropy yet sparse solutions. In RL, \cite{lee2018sparse} propose a method for obtaining sparse control policies through a specific application of Tsallis entropy.

\subsection{Notations and Organization}
In this paper, we adopt the following standard mathematical notations. Let $\mathbb{R}$ denote the field of real numbers, and $\mathbb{R}^+$ its subset of nonnegative elements. For a matrix $Z$, $Z^\top$ denotes its transpose; $|Z|$ and $|Z|_F$ represent its spectral norm and Frobenius norm, respectively. The trace of a matrix is denoted by $\operatorname{Tr}(Z)$. The operator $\operatorname{vec}_{\mathrm{s}}(Z)$ refers to the symmetrized vectorization of a symmetric matrix $Z$, while $\operatorname{unvec}_{\mathrm{s}}$ denotes its inverse, mapping the vector back to a symmetric matrix. 

The structure of the rest of paper is as follows. Section 2 introduces the formulation of the infinite-horizon LQ control problem with multiplicative noise, and derives its optimal solution under Tsallis entropy regularization. Section 3 presents the Q-function-based dynamic programming principle tailored for the Tsallis entropy setting and establishes the convergence of the proposed policy iteration algorithm. Section 4 develops a data-driven control framework by constructing a model-free policy iteration algorithm, which estimates Q-functions using instrumental variable regression, extending least-squares temporal difference methods. Section 5 provides numerical simulations in the context of the MV portfolio optimization problem, demonstrating the effectiveness of the proposed approach. 

\section{Preliminary and Problem Formulation}
This section provides a concise introduction to the essential mathematical principles of basic LQ problem with multiplicative noise and the Tsallis entropy preliminary. The typical real life example, i.e., MV problem, along with a compilation of requisite results utilized throughout the paper, are discussed in the later section. 

\subsection{Tsallis Entropy}
To enhance exploration throughout the control selection process, we modify the performance criterion by integrating a reward based on entropy. We employ Tsallis entropy as a measure to quantify the degree of uncertainty in the control distribution. This concept was initially presented by \cite{tsallis1988possible}. Before presenting the Tsallis entropy $\mathcal{T}_q$, it is essential to construct the $q$-exponential and $q$-logarithm functions, as well as the multivariate $q$-Gaussian within the framework of Tsallis statistics \cite{hashizume2024tsallis}. The variable $q$ is designated as a deformation parameter \cite{lavagno2002non}. As $q \rightarrow 1$, the formula reduces to the Shannon entropy. In this study, we assume that the value of $q \in (0,1)$ for simplicity. This assumption is particularly suitable for capturing heavy-tailed risks and ensuring robust allocation decisions in financial settings.

\begin{definition}[$q$-Exponential and $q$-Logarithm functions]
\begin{equation}
\begin{aligned}
\exp _q(x)&=[1+(1-q) x]_{+}^{\frac{1}{1-q}} & & x \in \mathbb{R}, \\
\log _q(x)&=\frac{x^{1-q}-1}{1-q} & & x>0,
\end{aligned}
\label{eqTE2.1}
\end{equation}
where $[x]_{+}=\max (x, 0)$. These functions have the inverse property that
\begin{equation}
\begin{aligned}
\exp _q\left(\log _q(x)\right)&=x \quad x>0, \\
\log _q\left(\exp _q(x)\right)&=x \quad x>-\frac{1}{1-q}.
\end{aligned}
\label{eqTE2.2}
\end{equation}
\label{def2.1}
\end{definition}

\begin{definition}[Multivariate $q$-Gaussian]
A $q$-Gaussian $N_q(\mu, \Sigma)$ is a $n$-dimensional random variable characterized by its density function as follows:
\begin{equation}
\varphi(x)=\frac{1}{Z_q} \exp _q\left(-\frac{(x-\mu)^{\top} \Sigma^{-1}(x-\mu)}{(n+4)-(n+2) q}\right)
\label{eqTE2.3}
\end{equation}
where
$$
Z_q=\operatorname{det}(\Sigma)^{\frac{1}{2}}\left(\pi \frac{(n+4)-(n+2) q}{1-q}\right)^{\frac{n}{2}} \frac{\Gamma\left(\frac{2-q}{1-q}\right)}{\Gamma\left(\frac{2-q}{1-q}+\frac{n}{2}\right)}.
$$
\label{def2.2}
\end{definition}
Subsequently, we define Tsallis entropy, which functions as an extension of Shannon entropy:
\begin{definition}[Tsallis entropy and $q$-Entropy]
Tsallis entropy $\mathcal{T}_q$ is defined as
\begin{equation}
\mathcal{T}_q(\varphi)=-\int \varphi(x)^q \log _q \varphi(x) d x.
\label{eqTE2.4}
\end{equation}
The deformed $q$-entropy employs a regularization term, as detailed below \cite{bao2022sparse}. The subsequent relationship, termed additive duality, ties it with Tsallis entropy. The deformed Tsallis entropy, or $q$-entropy $\mathcal{H}_q$ is defined as
\begin{equation}
\mathcal{H}_q(\varphi)=-\frac{1}{2-q}\left(\int \varphi(x) \log _q \varphi(x) d x-1\right).
\label{eqTE2.5}
\end{equation} 
In the following section, we will use the deformed q-entropy for the sake of notational simplicity. 
\label{def2.3}
\end{definition}

\subsection{LQ Problem} \label{LQmodel}
Define the admissible policy set as $\Pi=\{\pi: \mathcal{X} \rightarrow \mathcal{P}(\mathcal{U})\}$, with $\mathcal{X}$ the state space, $\mathcal{U}$ the action space, and $\mathcal{P}(\mathcal{U})$ the space of probability measures on action space $\mathcal{U}$. Here each admissible policy $\pi \in \Pi$ maps a state $x \in \mathcal{X}$ to a randomized action in $\mathcal{U}$.

According to Definition \ref{def2.3}, we regard the deformed $q$-entropy as a regularization term in this study. For a given admissible policy $\pi \in \Pi$, the corresponding Tsallis entropy is defined as for $ \forall x \in \mathcal{X}$,
\begin{equation}
\mathcal{H}_q(\pi(\cdot|x))=-\frac{1}{2-q}\left(\int_{\mathcal{U}} \pi(u | x) \log_q \pi(u | x) du-1\right), 
\label{eqH}
\end{equation}
where $-\frac{1}{2-q}\left(\mathbb {E} [\log_q \pi (u | x ) ]-1\right)$ can be used to generalize the above. 

The decision maker aims to find an optimal policy by minimizing the following objective function
\begin{equation}
\min _{\pi \in \Pi} \mathbb{E}_{x \sim \mathcal{D}}\left[J_\pi(x)\right],
\label{eqp-2.1.1}
\end{equation}
where value function $J_\pi$ for infinite horizon case is given with negative entropy:
\begin{equation}
J_\pi(x)=\mathbb{E}_\pi\left[\sum_{t=0}^{\infty} \gamma^t c\left(x_t, u_t\right) \Bigg| x_0=x\right],
\label{eqp-2.1.2}
\end{equation}
where $c\left(x_t, u_t\right) = x_t^{\top} Q x_t+u_t^{\top} R u_t + \frac{\tau}{2-q}\left(\log_q \pi (u_t | x_t) -1\right)$ such that for $t=0,1,2, \cdots$,
\begin{equation}
x_{t+1}=(A+ {\Delta A}_t) x_t+(B+ {\Delta B}_t) u_t+ (C x_t + D u_t) w_t, \ x_0 \sim \mathcal{D}.
\label{eqp-2.1.3}
\end{equation}

Here $x_t \in \mathbb{R}^m$ is the state of the system and the initial state $x_0$ follows an initial distribution $\mathcal{D}$. Then $u_t \in \mathbb{R}^n$ is the control at time $t$ following a policy $\pi$. In addition, the system is affected by three types of noise: $\{\Delta A_t\}_{t=0}^{\infty}$ and $\{\Delta B_t\}_{t=0}^{\infty}$ represent random perturbations to the system (transition) dynamics matrices $A \in \mathbb{R}^{m \times m}$ and $B \in \mathbb{R}^{m \times n}$, respectively, and $\left\{w_t\right\}_{t=0}^{\infty}$ is the process noise. All three noises are zero-mean, mutually independent, and independent and identically distributed (i.i.d) across time. We assume that all have finite second moments. That is, $\operatorname{Tr}(\Sigma_A)<\infty$ with $\Sigma_A=\mathbb{E}\left[\Delta A_t {\Delta A_t}^{\top}\right]$,  $\operatorname{Tr}(\Sigma_B)<\infty$ with $\Sigma_B=\mathbb{E}\left[ \Delta B_t {\Delta B_t}^{\top}\right]$ and $\operatorname{Tr}(W)<\infty$ with $W=\mathbb{E}\left[w_t w_t^{\top}\right]$ for any $t=0,1,2, \cdots$. $Q \in \mathbb{R}^{m \times m}$ and $R \in \mathbb{R}^{n \times n}$ are positive definite matrices that parameterize the quadratic costs. $\gamma \in (0,1)$ denotes the discount factor and $\tau$ is the regularization parameter. The expectation in (\ref{eqp-2.1.2}) is taken with respect to the control $u_t \sim \pi\left(\cdot | x_t\right)$ and system noise $w_t$ for $t \geq 0$.

\subsection{Optimal Strategy and Algebraic Riccati Equation}
The optimal value function $J^*: \mathcal{X} \rightarrow \mathbb{R}$ is defined as
\begin{equation}
J^*(x)=\min _{\pi \in \Pi} J_\pi(x).
\label{eq2.11}
\end{equation}
The following theorem provides the explicit formulation for the optimal control policy and the associated optimal value function: the optimal policy is defined as a multivariate $q$-Gaussian distribution, with the mean being linear in the state $x$ and a constant covariance matrix.

\begin{theorem}[Optimal value functions and optimal policy]
The optimal value function $J^*: \mathcal{X} \rightarrow \mathbb{R}$ in (\ref{eq2.11}) can be expressed as 
\begin{equation}
J^*(x)=x^{\top} Px+c,
\label{TE-Jstar}
\end{equation}
where $c$ is a constant and the optimal control policy $\pi^*$ is $q$-Gaussian with mean $\mu^*$ and variance $\Sigma^*$ as follows:
{\footnotesize \begin{equation}
\begin{aligned}
P &=Q+K^{* \top} R K^*+\gamma \left(A-B K^*\right)^{\top} P\left(A-B K^*\right) \\
&\quad + \gamma \left(C-DK^*\right)^{\top}WP \left(C-DK^*\right) + \gamma \Sigma_A P +  \gamma {K^*}^{\top} \Sigma_B P K^* , \\
c &=\frac{1}{1-\gamma}\left[\operatorname{Tr}\left(\Sigma^*(R+\gamma B^{\top} P B + \gamma {\Sigma_B}P + \gamma D^{\top} WP D)\right)- \frac{\tau}{2-q}\right.\\
&\quad \left. \cdot \biggl(\log_q\left(Z_q\right)  + \frac{n}{(n+4)-(n+2)q} +  \frac{n(q -1) \log_q\left(Z_q\right)}{(n+4)-(n+2)q} +1 \biggr) \right],     
\end{aligned}
\label{eq2.12}
\end{equation}}
with
\begin{equation}
\begin{aligned}
K^*&=\gamma (R+ \gamma B^{\top} P B + \gamma {\Sigma_B}P + \gamma WD^{\top} P D)^{-1} (B^{\top} P A+ WD^{\top} P C), \\
\Sigma^* &=\frac{\tau}{(n+4)-(n+2) q} (R + \gamma B^{\top}PB+ \gamma {\Sigma_B}P + \gamma WD^{\top} P D)^{-1}.\\
\end{aligned}
\label{eq2.13}
\end{equation}
For any $x \in \mathcal{X}$, the corresponding optimal policy $\pi^*$ to (\ref{eq2.11}) is a q-Gaussian policy 
\begin{equation}
\pi^*(\cdot | x)=\mathcal{N}_q\left(-K^* x, \Sigma^*\right),
\label{TE-gaussian}
\end{equation}
where $\mu^* = -K^*x$.
\label{thm2.4}
\end{theorem}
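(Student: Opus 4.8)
The plan is to establish the result via a verification (dynamic programming / guess-and-check) argument rather than a fixed-point iteration. First I would posit the ansatz $J^*(x) = x^\top P x + c$ and substitute it into the Bellman equation
\[
J^*(x) = \min_{\pi(\cdot|x) \in \mathcal{P}(\mathcal{U})} \mathbb{E}_{u \sim \pi(\cdot|x)}\Bigl[ x^\top Q x + u^\top R u + \frac{\tau}{2-q}\bigl(\log_q \pi(u|x) - 1\bigr) + \gamma\, \mathbb{E}_{w,\Delta A,\Delta B}\bigl[ J^*(x') \bigr] \Bigr],
\]
where $x' = (A + \Delta A)x + (B + \Delta B)u + (Cx + Du)w$. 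Expanding $\mathbb{E}[x'^\top P x']$ and using that the three noise sources are zero-mean, mutually independent, i.i.d., with second moments $\Sigma_A, \Sigma_B, W$, the cross terms vanish and one is left with a quadratic-in-$u$ expression of the form $u^\top \Xi u + 2 u^\top \Lambda x + (\text{terms in } x)$ under the expectation, where $\Xi = R + \gamma B^\top P B + \gamma \Sigma_B P + \gamma D^\top W P D$ (modulo how the scalar $W$ versus matrix $W$ convention is handled in the paper's display) and $\Lambda$ collects $B^\top P A + D^\top W P C$-type terms. The key observation is that this is exactly the structure of a $q$-Gaussian maximum-entropy problem: minimizing $\mathbb{E}_\pi[u^\top \Xi u + 2u^\top \Lambda x] + \frac{\tau}{2-q}\mathbb{E}_\pi[\log_q \pi - 1]$ over $\pi \in \mathcal{P}(\mathcal{U})$.

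Second, I would invoke (or prove as a lemma) the variational characterization of the $q$-Gaussian: among all densities with a prescribed ``quadratic energy'' functional, the one extremizing the deformed Tsallis entropy $\mathcal{H}_q$ is a $q$-Gaussian, and its mean and covariance are pinned down by the coefficients $\Xi$ and $\Lambda$. Completing the square in $u$ shows the energy is $ (u - \mu)^\top \Xi (u - \mu) + \text{const}$ with $\mu = -\Xi^{-1}\Lambda x =: -K^* x$, and the Euler–Lagrange (Karush–Kuhn–Tucker) condition for the entropy-regularized problem forces $\pi^*(\cdot|x) \propto \exp_q\bigl( -\lambda (u-\mu)^\top \Xi (u-\mu) \bigr)$ for the appropriate multiplier $\lambda$. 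Matching this against the $q$-Gaussian density in Definition~\ref{def2.2} identifies $\Sigma^{*-1}$ with $\frac{(n+4)-(n+2)q}{\tau}\,\Xi$, giving the stated $\Sigma^*$, and identifies the normalizer with $Z_q$. Plugging $\pi^*$ back in, the $x$-dependent part of the right-hand side must equal $x^\top P x$, which yields the Riccati equation for $P$ (after substituting $\mu = -K^* x$ and simplifying $(A - BK^*)$, $(C - DK^*)$ groupings), and the constant part must equal $c$, which yields the formula for $c$ — here one needs the closed-form value of $\mathbb{E}_{\pi^*}[\log_q \pi^*]$ for a $q$-Gaussian, i.e. $\int \varphi \log_q \varphi$, expressed through $\log_q(Z_q)$ and $n/((n+4)-(n+2)q)$, which accounts for the somewhat baroque expression in \eqref{eq2.12}.

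Third, I would close the verification argument: having a $P \succeq 0$ solving the Riccati equation and the induced policy $\pi^*$, a standard telescoping/induction argument on the discounted cost (using $\gamma \in (0,1)$ and the finite-second-moment assumptions to control the tail) shows $J_{\pi^*}(x) = x^\top P x + c$ and $J_\pi(x) \ge x^\top P x + c$ for every admissible $\pi$, because at each step $\pi^*(\cdot|x)$ is the pointwise minimizer inside the Bellman operator. This gives optimality of $\pi^*$ and the identity $J^* = J_{\pi^*}$.

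The main obstacle will be the $q$-Gaussian variational step: showing rigorously that the entropy-regularized quadratic minimization over \emph{all} probability measures on $\mathcal{U} = \mathbb{R}^n$ is solved by a $q$-Gaussian with exactly the covariance in \eqref{eq2.13}, and then carrying out the bookkeeping for $\int \varphi \log_q \varphi$ so that the constant $c$ comes out as written. This requires care with the support constraint $[\,\cdot\,]_+$ in $\exp_q$ (for $q \in (0,1)$ the $q$-Gaussian is compactly supported), with the precise constant $(n+4)-(n+2)q$ that makes the density normalizable and matches Definition~\ref{def2.2}, and with whether the integrability/finiteness conditions on $\Xi$ (positive definiteness, inherited from $R \succ 0$ and $P \succeq 0$) actually hold along the argument. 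A secondary, more mechanical obstacle is verifying that the $W$-weightings in \eqref{eq2.12}–\eqref{eq2.13} (which appear inconsistently as $\gamma D^\top W P D$ versus $\gamma W D^\top P D$ in the statement) are internally consistent once the scalar-versus-matrix nature of the process noise $w_t$ is fixed.
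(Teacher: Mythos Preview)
Your proposal is correct and follows essentially the same verification-via-Bellman-equation route as the paper: posit the quadratic ansatz $J^*(x)=x^\top P x + c$, expand $\mathbb{E}[x_+^\top P x_+]$ under the noise assumptions, identify the inner minimization over $\pi$ as the $q$-Gaussian variational problem, then match quadratic and constant coefficients to read off the Riccati equation and $c$. The paper's own proof is in fact terser than your plan: it outsources the $q$-Gaussian optimality step (your second paragraph) to references \cite{hashizume2024tsallis,guo2023fast}, then directly computes $\mathbb{E}_{\pi^*}[\log_q \pi^*]$ via the $q$-logarithm product rule and substitutes into the dynamic programming identity to display the coefficient match; it does not carry out the telescoping verification tail you propose in your third step.
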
 

\begin{proof}[\textbf{Proof of Theorem \ref{thm2.4}}]
The proof of (\ref{eq2.13}) and (\ref{TE-gaussian}) in Theorem \ref{thm2.4} refers partially on \cite{hashizume2024tsallis, guo2023fast}. To derive the associated optimal value function, we first calculate $\mathbb{E}_{\pi^*}\left[\log_q \left(\pi^*(u | x)\right)\right]$ in the entropy of policy $\pi^*$ at any state $x \in \mathcal{X}$:
$$
\begin{aligned}
\mathbb{E}_{\pi^*}\left[\log_q \left(\pi^*(u | x)\right)\right]&=\int_{\mathcal{U}}\pi^*(u | x)  \log_q \left(\pi^*(u | x)\right)d u \\
&=- \log_q\left(Z_q\right) - \alpha n + (1-q)\alpha n \log_q\left(Z_q\right),
\end{aligned}
$$
where $\alpha = \frac{1}{(n+4)-(n+2)q}$ and $Z_q$ is defined in Definition \ref{eqTE2.3}. The calculation follows the product rule for $q$-Logarithm with $\log _q(a \cdot b)=\log _q(a)+\log _q(b)+(1-q) \log _q(a) \log _q(b)$. Thus the entropy is 
\begin{equation}
\begin{aligned}
\mathcal{H}_q(\pi(\cdot|x)) &= -\frac{1}{2-q}\left(\mathbb {E} [\log_q \pi (u | x ) ]-1\right) \\
& = \frac{1}{2-q}\left(\log_q\left(Z_q\right) + \alpha n +  (q-1)\alpha n \log_q\left(Z_q\right) +1 \right).
\end{aligned}
\label{TE-entropy}
\end{equation}
Substitute (\ref{TE-gaussian}), (\ref{TE-entropy}) and (\ref{TE-Jstar}) into $J^*$ with dynamic programming principle:
$$
\begin{aligned}
&\quad \ J^*(x) \\
&=  x^{\top} Q x +\min _\pi \mathbb{E}_\pi\biggl\{u^{\top} R u+ \frac{\tau}{2-q}\left( \log_q (\pi(u | x))-1\right) \\
&\quad +\gamma\Bigl[\left((A+\Delta A) x + (B+ \Delta B) u+(C x + D u)w\right)^{\top} P \\
&\quad \cdot \left((A+\Delta A) x+(B+\Delta B) u+(C x + D u)w\right)+c\Bigr]\biggr\} \\
&=  x^{\top}\Bigl(Q+K^{* \top} R K^*+\gamma\left(A-B K^*\right)^{\top} P\left(A-B K^*\right) \\
&\quad + \gamma\left(C-D K^*\right)^{\top} WP \left(C-D K^*\right)+\gamma \Sigma_A P + \gamma {K^*}^{\top} \Sigma_B P K^*\Bigr) x \\
&\quad +\operatorname{Tr}\left(\Sigma^* R\right) -  \frac{\tau}{2-q}\left(\log_q\left(Z_q\right) + \alpha n +  (q -1)\alpha n \log_q\left(Z_q\right) +1 \right) \\
&\quad +\gamma\left(\operatorname{Tr}(\Sigma^* B^{\top} P B)+ \operatorname{Tr}\left(\Sigma^* \Sigma_B P\right) +\operatorname{Tr}(\Sigma^* D^{\top} W P D)+c\right). \\
\end{aligned}
$$
The equations in ($\ref{eq2.12}$) can be observed above and the proof is complete.
\end{proof}

\section{Dynamic Programming for Tsallis Entropy LQ Problem}
In this section, we develop dynamic programming algorithms for the Tsallis entropy-regularized LQ control problem, including policy iteration and value iteration methods. These algorithms aim to compute the optimal value function and control policy, and their convergence properties can be established (Theorem \ref{TE-thm-5.2} and \ref{TE-thm-5.3}). The policy iteration method alternates between policy evaluation, which computes the value function under a fixed policy, and policy improvement, which updates the policy based on the current value function. The value iteration method directly updates the value function toward optimality. The proof of convergence is referenced to \cite{lee2019tsallis}.

Following the dynamics specified in equation (\ref{eqp-2.1.3}), we use the subscript “+” to indicate progression in time: $x_{+}$ and $u_{+}$ represent the next state and control input, respectively. This notation extends naturally to $x_{++}, u_{++}$, etc., by recursive application of the same rule. Let $Z$ be expressed as a function $\mathcal{M}$ of the parameter $X$,
\begin{equation}
\begin{aligned}
Z &= \mathbb{E}\left[\begin{bmatrix}x \\ u\end{bmatrix}\cdot \begin{bmatrix}x^\top & u^\top\end{bmatrix}\right] \\
&= \begin{bmatrix}\mathbf{I}_m \\ -K\end{bmatrix}X\begin{bmatrix}\mathbf{I}_m & -K^\top\end{bmatrix} + \begin{bmatrix} \mathbf{0}_{m\times m} & \mathbf{0}_{m \times n} \\ \mathbf{0}_{n\times m} & \Sigma \end{bmatrix} \\
&= \mathcal{M}(X),
\label{eqZ}
\end{aligned}
\end{equation}
where $X = \mathbb{E}[xx^{\top}]$. To express the second moment of state value of next time $X_{+}$ in terms of $Z$ as function $\mathcal{E}(Z)$:
\begin{equation}
\begin{aligned}
X_{+} &= \mathbb{E}[x_{+}x_{+}^\top] \\
&= \begin{bmatrix}A & B\end{bmatrix}Z\begin{bmatrix}A & B\end{bmatrix}^\top+ W\begin{bmatrix}C & D\end{bmatrix}Z\begin{bmatrix}C & D\end{bmatrix}^\top \\
&\quad + \Sigma_A \begin{bmatrix}\mathbf{I}_m & \mathbf{0}_{m \times m} \end{bmatrix} Z \begin{bmatrix}\mathbf{I}_m  & \mathbf{0}_{m \times m} \end{bmatrix}^{\top} \\
& \quad + \Sigma_B \begin{bmatrix}\mathbf{0}_{n \times n} & \mathbf{I}_n \end{bmatrix} Z \begin{bmatrix}\mathbf{0}_{n \times n} & \mathbf{I}_n \end{bmatrix}^{\top},
\end{aligned}
\label{eqX}
\end{equation}
and the value function (\ref{eqp-2.1.2}) is converted to
\begin{equation}
J_\pi(X)= \sum_{t=0}^{\infty} \gamma^t \left( \operatorname{Tr}\left[H\mathcal{M}(X_t)\right] + const'. \right),
\end{equation}
where $H = \operatorname{diag}(Q,R)$ and 
$$
const'. = -\frac{\tau}{2-q}\left(\log_q\left(Z_q\right) + \alpha n +  (q -1)\alpha n \log_q\left(Z_q\right) +1 \right).
$$
To formulate our policy iteration method, we examine the Bellman operator \cite{bertsekas2022abstract}, defined in accordance with the framework presented therein, as
\begin{equation}
\left(\mathcal{T}_\pi J\right)(X)= \operatorname{Tr}\left[\mathcal{M}(X)H\right] + const'.+ \gamma J\left(\mathcal{E}\left(\mathcal{M}(X)\right)\right),
\label{eq2.16}
\end{equation}
and $\mathcal{T} J=\min _\pi \mathcal{T}_\pi J$, operating on $J: \mathcal{X} \rightarrow \mathbb{R}$.
The optimal value of (\ref{eq2.11}) is determined as the fixed-point of $\mathcal{T}$ if the dynamics can be stabilized, meaning that a mean square stabilizing controller exists. The optimal controller, according to (\ref{eq2.16}), is $\operatorname{argmin}_\pi \mathcal{T}_\pi J^*$, where $J^*=\mathcal{T} J^*$.

We aim to determine $K^*$ via policy iteration, which involves the iterative updating of $K$ to identify a $K^+$ such that
\begin{equation}
K^+=\underset{K^{\prime}}{\operatorname{argmin}} \mathcal{T}_{K^{\prime}} J_K, \mbox{ with } J_K=\mathcal{T}_K J_K.
\label{eq2.17}
\end{equation}

\subsection{Q-Functions}
Various methods are available for addressing these equations through making use of data.   Nevertheless, the computation of the minimizer in (\ref{eq2.17}) needs the dynamics, even after determining $J_K$. Therefore, we utilize Q-functions.

A Q-function in this context maps $Z$ to some real number. Following the formulation in \cite{bertsekas2022abstract}, we define the Bellman operator associated with Q-functions as:
\begin{equation}
\left(\mathcal{F}_\pi \mathcal{Q}\right)(Z) = \operatorname{Tr}\left[ZH\right] + const'.+ \gamma \mathcal{Q}_\pi \left(\mathcal{M}\left(\mathcal{E}(Z) \right)\right),
\label{eq2.19}
\end{equation}
and $\mathcal{F} \mathcal{Q}=\min _\pi \mathcal{F}_\pi \mathcal{Q}$. For some policy $\pi$ we define $\mathcal{Q}_\pi$ as
\begin{equation}
\mathcal{Q}_\pi(Z) = \operatorname{Tr}\left[ZH\right] + const'.+\gamma  J_\pi\left(\mathcal{E}(Z)\right) ,
\label{eq2.20}
\end{equation}
where $J_\pi$ solves $J_\pi=\mathcal{T}_\pi J_\pi$. 

\subsection{Tsallis Policy Evaluation}
\begin{theorem}[Tsallis Policy Evaluation]
For fixed $\pi$ and $0<q<1$, consider the operator $\mathcal{F}_{\pi}$, and define Tsallis policy evaluation as $\mathcal{Q}_\pi= \mathcal{F}_\pi \mathcal{Q}_\pi$ for an arbitrary initial guess. Then, $\mathcal{Q}_\pi$ converges to $\mathcal{Q}$ and satisfies the policy evaluation equation (\ref{eq2.19})-(\ref{eq2.20}).
\label{TE-thm-5.2}
\end{theorem}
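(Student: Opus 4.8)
The plan is to show that the Q-function Bellman operator $\mathcal{F}_\pi$ defined in (\ref{eq2.19}) is a $\gamma$-contraction on an appropriate space of functions, so that the Banach fixed-point theorem guarantees convergence of the iteration $\mathcal{Q}^{(k+1)} = \mathcal{F}_\pi \mathcal{Q}^{(k)}$ to the unique fixed point $\mathcal{Q}_\pi$, which by construction satisfies (\ref{eq2.19})--(\ref{eq2.20}). First I would recall that a fixed policy $\pi$ here is parametrized by a gain $K$ and a covariance $\Sigma$, so the composition $\mathcal{M}\circ\mathcal{E}$ in (\ref{eq2.19}) acts linearly (plus a constant shift) on $Z$; in particular $\mathcal{E}(Z)$ is a linear map of $Z$ (see (\ref{eqX})) and $\mathcal{M}(X)$ is affine in $X$ (see (\ref{eqZ})). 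Thus iterating $\mathcal{F}_\pi$ from an arbitrary initial $\mathcal{Q}^{(0)}$ is equivalent to iterating an affine operator; the difference of any two iterates $\mathcal{Q}^{(k)} - \tilde{\mathcal{Q}}^{(k)}$ evaluated at any $Z$ satisfies a purely linear recursion with factor $\gamma$ times the closed-loop second-moment propagation.

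The key steps, in order, are: (i) make precise the function class — the natural choice is quadratic-plus-constant functions of $Z$ of the form $\mathcal{Q}(Z) = \operatorname{Tr}[Z M] + \text{const}$ for symmetric $M$, since the class is invariant under $\mathcal{F}_\pi$ by (\ref{eq2.19}); (ii) express $\mathcal{F}_\pi$ acting on such a $\mathcal{Q}$ as the map $M \mapsto H + \gamma\, \mathcal{L}_K^\top M \mathcal{L}_K$-type Lyapunov recursion (absorbing the noise terms $\Sigma_A,\Sigma_B,W$ exactly as they appear in $\mathcal{E}$ and in the Riccati equation (\ref{eq2.12})), together with a separate scalar recursion for the constant part that is a contraction with factor $\gamma<1$; (iii) invoke mean-square stabilizability of the closed loop under $\pi$ (the standing assumption that a mean-square stabilizing controller exists, used already for the fixed point of $\mathcal{T}$) so that the associated discounted Lyapunov operator has spectral radius strictly less than one when scaled by $\gamma$; (iv) conclude the sequence $\mathcal{Q}^{(k)}$ is Cauchy and converges to the unique $\mathcal{Q}_\pi$, and verify that the limit, being a fixed point of $\mathcal{F}_\pi$, coincides with the $\mathcal{Q}_\pi$ built from $J_\pi$ in (\ref{eq2.20}) because $J_\pi = \mathcal{T}_\pi J_\pi$ and the definitions (\ref{eq2.19})--(\ref{eq2.20}) are consistent by direct substitution of (\ref{eq2.16}).

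For the argument I would lean on the abstract dynamic programming framework of \cite{bertsekas2022abstract}: once $\mathcal{F}_\pi$ is shown to be a monotone, $\gamma$-discounted, affine operator over the quadratic class, the standard contraction/fixed-point results apply verbatim, and the only problem-specific work is the bookkeeping of the multiplicative-noise terms. Concretely, I would verify that $\mathcal{E}(\mathcal{M}(X))$ in (\ref{eqX}) has the closed form $X_+ = (A-BK)X(A-BK)^\top + (C-DK)X(C-DK)^\top W + \Sigma_A X + K\Sigma_B K^\top + (\text{terms in }\Sigma)$, matching the Riccati structure in (\ref{eq2.12}), so the Lyapunov operator governing the recursion is exactly the adjoint of the one whose stability is assumed.

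The main obstacle I expect is step (iii): carefully justifying that $\gamma$ together with mean-square stabilizability of $\pi$ makes the relevant (discounted) closed-loop second-moment operator a strict contraction in a suitable norm — i.e., controlling the spectral radius of the map $M \mapsto \gamma\big[(A-BK)^\top M (A-BK) + (C-DK)^\top W M (C-DK) + \Sigma_A M + K^\top \Sigma_B M K\big]$. For $\gamma<1$ this should follow from boundedness of the un-discounted operator's iterates under the stabilizability hypothesis, but one must be precise about which operator norm makes the contraction constant genuinely $\gamma$ (or a quantity strictly below one) uniformly over the quadratic class, rather than merely asymptotic; handling the constant (entropy) term is comparatively routine since it contracts at rate exactly $\gamma$.
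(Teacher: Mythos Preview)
Your contraction-mapping approach is sound and in fact addresses more than the paper's own proof does. The paper proceeds by direct verification: it takes the candidate $\mathcal{Q}_\pi$ from (\ref{eq2.20}), applies $\mathcal{F}_\pi$ to it, and unwinds the definitions to rewrite $(\mathcal{F}_\pi \mathcal{Q}_\pi)(Z)$ as $\operatorname{Tr}[ZH] + const'. + \gamma(\mathcal{T}_\pi J_\pi)(\mathcal{E}(Z))$, then invokes $J_\pi = \mathcal{T}_\pi J_\pi$ to collapse this back to $\mathcal{Q}_\pi(Z)$. That is the entire argument: a four-line check that the $\mathcal{Q}_\pi$ built from $J_\pi$ is a fixed point of $\mathcal{F}_\pi$. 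The paper does not establish uniqueness of the fixed point, nor does it prove convergence of the iteration from an arbitrary initial guess, despite the theorem statement asserting both. Your route---showing $\mathcal{F}_\pi$ acts as an affine map on the quadratic-plus-constant class, reducing to a discounted Lyapunov recursion on the matrix parameter together with a geometric recursion on the scalar part, and invoking mean-square stabilizability of the closed loop under $\pi$ to bound the spectral radius---actually delivers the convergence claim. The trade-off is that the paper's computation is short and model-agnostic (it only uses the abstract relations between $\mathcal{F}_\pi$, $\mathcal{T}_\pi$, $\mathcal{M}$, $\mathcal{E}$), whereas your argument requires the explicit LQ structure and the stabilizability hypothesis you correctly flagged in step~(iii); without that hypothesis on $\pi$ the contraction step can fail even with $\gamma<1$, so be sure to state it as an assumption rather than something inherited automatically from the existence of \emph{some} stabilizing controller.
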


\begin{proof}[\textbf{Proof of Theorem \ref{TE-thm-5.2}}]
We can show that this choice of $\mathcal{Q}_\pi$ is a fixed-point of $\mathcal{F}_\pi$ :
$$
\begin{aligned}
\left(\mathcal{F}_\pi \mathcal{Q}_\pi\right)(Z) & = \operatorname{Tr}\left[ZH\right] + const'.+ \gamma \left[ \operatorname{Tr}\left(\mathcal{M}\left(\mathcal{E}(Z) \right)H\right)  \right. \\
&\quad \left. + const'. +\gamma J_\pi(\mathcal{E}\left(\mathcal{M}\left(\mathcal{E}(Z) \right)\right))\right]\\
& = \operatorname{Tr}\left[ZH\right] + const'.+ \gamma \left(\mathcal{T}_\pi J_\pi\right)\left(\mathcal{E}(Z)\right) \\
& = \operatorname{Tr}\left[ZH\right] + const'.+ \gamma  J_\pi\left(\mathcal{E}(Z)\right)\\
& = \mathcal{Q}_\pi(Z),
\end{aligned}
$$
where we utilized the definition of $\mathcal{T}_\pi$ for the second equality and applied $J_\pi=\mathcal{T}_\pi J_\pi$ for the third.
\end{proof}
The value function evaluated from Tsallis policy evaluation can be employed to update the policy distribution. In the policy improvement step, given such a $\mathcal{Q}_K$ we implement policy iteration as:
\begin{equation}
K^+(x)=\underset{K^{\prime}}{\operatorname{argmin}} \mathcal{Q}_K\left(x\right),
\label{original-K+}
\end{equation}
which is equivalent to $K^{+}=\operatorname{argmin}_{K^{\prime}} \mathcal{F}_{K^{\prime}} \mathcal{Q}_K$.  According to Theorem \ref{thm2.4}, $J_\pi\left(\mathcal{E}(Z)\right)$ in (\ref{eq2.20}) can be written as
\begin{equation}
\begin{aligned} 
&\quad \ J_\pi\left(\mathcal{E}(Z)\right) \\
&= \mathbb{E}_\pi\left[x_+^{\top} P x_+\right]+c\\ 
&=\mathbb{E}_\pi \Bigl[\left((A+\Delta A) x + (B+\Delta B) u+(C x + D u)w\right)^{\top} P   \\
& \quad \cdot ((A+\Delta A) x +(B+\Delta B) u+(C x + D u)w)\Bigr]+c\\
&= \mathbb{E}_\pi \left[x^{\top} (A^{\top} P A + \Sigma_A P + WC^{\top}PC) x + u^{\top}\left(B^{\top} P B +\Sigma_B P \right. \right. \\
&\quad \left. \left. + WD^{\top} P D\right) u+ 2 u^{\top} \left(B^{\top} P A + WD^{\top} PC\right) x\right]+c \\
& = \operatorname{Tr}\left[\Theta'Z \right]+c, \\
\end{aligned}
\label{eq2.22}
\end{equation}
where 
$$
\Theta'  = \begin{bmatrix}
A^\top P A+ \Sigma_AP + WC^\top P C & \gamma A^\top P B + WC^\top PD \\  B^T P A + WD^\top P C & B^\top P B+ \Sigma_B P +  WD^\top P D
\end{bmatrix}.
$$ 
When taking (\ref{eq2.22}) into (\ref{eq2.20}), we can use a linear parametrization \cite{coppens2022policy,wang2018stochastic} regarding $\Theta$ to express $\mathcal{Q}_\pi(Z)$ as
\begin{equation}
\mathcal{Q}_\pi(Z) = \operatorname{Tr}[ZH] + const'.+ \gamma \left(\operatorname{Tr}\left[\Theta'Z \right]+c \right) = \operatorname{Tr}\left[\Theta Z \right]+ const.
\label{eq2.24}
\end{equation}
where 
$$
const. = const'. + \gamma c,
$$ 
and $\Theta = H + \gamma \Theta'$, which is defined as
\begin{equation}
\Theta=
\begin{bmatrix}
\Theta_{x x} & \Theta_{x u} \\
\Theta_{u x} & \Theta_{u u}
\end{bmatrix}.
\label{eq2.23}
\end{equation}
where $\Theta_{x x}  = Q+ \gamma A^\top P A+ \gamma \Sigma_AP + \gamma WC^\top P C $, $\Theta_{x u} =  \gamma A^\top P B + \gamma WC^\top PD$, $\Theta_{u x} = \gamma B^T P A + \gamma WD^\top P C$, $\Theta_{u u} = R+\gamma B^\top P B+ \gamma \Sigma_B P + \gamma WD^\top P D$. 
Then we have 
\begin{equation}
K^{+}=\left(\Theta_\pi\right)_{u u}^{-1}\left(\Theta_\pi\right)_{u x}.
\label{eqK}
\end{equation}
The comparison with $K^{*}$ from the previous section further supports that the optimal gain can be attained by choosing the appropriate value for $\Theta_\pi$.

\subsection{Tsallis Policy Improvement and Convergence}
\begin{theorem}[Tsallis Policy Improvement]
For $0\leq q<1$, let $K^{+}$ be the updated policy from (\ref{eqK}) using $\mathcal{Q}_{K}$. For all $(x, u) \in \mathcal{X} \times \mathcal{U}$, $\mathcal{Q}_{K^{+}}(x)$ is less than or equal to $\mathcal{Q}_{K}(x,u)$.
\label{TE-thm-5.3}
\end{theorem}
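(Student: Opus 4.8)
The plan is to run the standard abstract dynamic-programming argument for policy improvement, adapted to the Q-function Bellman operator $\mathcal{F}_\pi$ of (\ref{eq2.19}) and using Theorem \ref{TE-thm-5.2} as the workhorse. First I would establish the one-step improvement inequality $\mathcal{F}_{K^+}\mathcal{Q}_K \le \mathcal{Q}_K$ pointwise on $Z$-space. Since Theorem \ref{TE-thm-5.2} gives $\mathcal{Q}_K=\mathcal{F}_K\mathcal{Q}_K$, and since $K^+$ (together with its associated covariance) is by construction the greedy policy, i.e. $\mathcal{F}_{K^+}\mathcal{Q}_K=\mathcal{F}\mathcal{Q}_K=\min_\pi\mathcal{F}_\pi\mathcal{Q}_K$, the policy $K$ is one feasible competitor in that minimization, whence $\mathcal{F}_{K^+}\mathcal{Q}_K\le\mathcal{F}_K\mathcal{Q}_K=\mathcal{Q}_K$. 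The substantive point in this step is to verify that the closed form (\ref{eqK}), $K^+=(\Theta_K)_{uu}^{-1}(\Theta_K)_{ux}$ together with $\Sigma^+\propto(\Theta_K)_{uu}^{-1}$, actually realizes $\operatorname*{argmin}_\pi\mathcal{F}_\pi\mathcal{Q}_K$: because $\mathcal{Q}_K(Z)=\operatorname{Tr}[\Theta_K Z]+const.$ has exactly the linear–quadratic-plus-Tsallis-entropy structure of the objective treated in Theorem \ref{thm2.4}, the completion-of-the-square computation in that proof applies verbatim with the optimal $P$ replaced by the current iterate's $P_K$; the hypotheses $q\in[0,1)$, $R\succ 0$, $Q\succ 0$ guarantee $(\Theta_K)_{uu}\succ 0$ and strict convexity of the per-stage objective, so the minimizer over $\mathcal{P}(\mathcal{U})$ is the unique $q$-Gaussian with mean $-K^+x$ and the stated covariance.

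Second, I would observe that $\mathcal{F}_{K^+}$ is monotone: writing $(\mathcal{F}_{K^+}\mathcal{Q})(Z)=\operatorname{Tr}[ZH]+const'.+\gamma\,\mathcal{Q}(\mathcal{M}(\mathcal{E}(Z)))$, the map $\mathcal{Q}\mapsto\mathcal{F}_{K^+}\mathcal{Q}$ is affine with nonnegative multiplier $\gamma$, so $\mathcal{Q}_1\le\mathcal{Q}_2$ implies $\mathcal{F}_{K^+}\mathcal{Q}_1\le\mathcal{F}_{K^+}\mathcal{Q}_2$. Applying $\mathcal{F}_{K^+}$ repeatedly to the inequality from the first step produces the decreasing chain $\mathcal{F}_{K^+}^{\,n+1}\mathcal{Q}_K\le\mathcal{F}_{K^+}^{\,n}\mathcal{Q}_K\le\cdots\le\mathcal{F}_{K^+}\mathcal{Q}_K\le\mathcal{Q}_K$ for every $n$. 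By Theorem \ref{TE-thm-5.2} applied to the policy $K^+$, the iterates $\mathcal{F}_{K^+}^{\,n}\mathcal{Q}_K$ converge to $\mathcal{Q}_{K^+}$; passing to the limit gives $\mathcal{Q}_{K^+}(Z)\le\mathcal{Q}_K(Z)$ for all admissible $Z$. Specializing to the $Z$ induced by the greedy action and using that $-K^+x$ is precisely the unconstrained minimizer of $u\mapsto\mathcal{Q}_K(x,u)$ (which is how (\ref{eqK}) was derived) yields $\mathcal{Q}_{K^+}(x)\le\mathcal{Q}_K(x,-K^+x)\le\mathcal{Q}_K(x,u)$ for all $(x,u)\in\mathcal{X}\times\mathcal{U}$, as claimed.

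The main obstacle I anticipate is the first step, and within it two delicate points: (i) confirming that (\ref{eqK}) is the genuine pointwise minimizer of $\mathcal{F}_\pi\mathcal{Q}_K$ over all of $\Pi$ — not merely over linear $q$-Gaussian policies — which requires the variational characterization of the $q$-Gaussian as the entropy-regularized minimizer that underlies Theorem \ref{thm2.4}; and (ii) ensuring that $K^+$ is mean-square stabilizing, so that $\mathcal{Q}_{K^+}$ is finite and Theorem \ref{TE-thm-5.2} is legitimately applicable to it. The latter is the usual hidden hypothesis in LQ policy iteration; I would handle it by assuming the initial policy is mean-square stabilizing and arguing that the one-step value decrease, together with the coercivity provided by $Q\succ 0$, keeps $K^+$ in the stabilizing set. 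Once these are settled, the monotonicity, the telescoping, and the limit are routine.
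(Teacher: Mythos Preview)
Your proposal is correct and takes essentially the same approach as the paper: establish the one-step improvement inequality from the greediness of $K^{+}$, then iterate to the limit (you via monotonicity of $\mathcal{F}_{K^{+}}$ and Theorem~\ref{TE-thm-5.2}, the paper via explicitly unrolling the discounted cost sum) to conclude $\mathcal{Q}_{K^{+}}\le\mathcal{Q}_{K}$. Your treatment of the stability of $K^{+}$ is in fact more careful than the paper's, which simply asserts $\gamma^{t+1}\mathbb{E}[J_{\pi_K}(x_{t+1})\mid x_0=x]\to 0$ without further justification.
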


\begin{proof}[\textbf{Proof of Theorem \ref{TE-thm-5.3}}]
Since $K^+$ is updated by equation  and the maximization in equation (\ref{original-K+}) and the minimization in (\ref{original-K+}) is convex, the following inequality holds
$$
\begin{aligned}
&\quad \ \mathbb{E}_{\pi}\left[\mathcal{Q}_{K^+}(x,u)-\tau \mathcal{H}_q(\pi_{K^+}(u|x)) | x \right] \\
& \leq \mathbb{E}_{\pi}\left[\mathcal{Q}_{K}(x,u)-\tau \mathcal{H}_q(\pi_{K}(u|x)) | x \right]={J}_{\pi_K}(x),
\end{aligned}
$$
where the equality holds when $\pi_{K^+}=\pi_{K}$. This inequality induces a performance improvement,
$$
\begin{aligned}
\mathcal{Q}_{K}(x,u) &= \mathbb{E}_{\pi}\left[x_0^\top Q x_0 + u_0^\top R u_0 - \tau \mathcal{H}_q(\pi_{K}(u_0|x_0)) \right. \\
&\quad \left. + \gamma {J}_{\pi_K}(x_1) | x_0 = x \right] \\
&\geq \mathbb{E}_{\pi}\left[x_0^\top Q x_0 + u_0^\top R u_0 - \tau \mathcal{H}_q(\pi_{K}(u_0|x_0)) | x_0=x\right] \\
&\quad + \gamma \mathbb{E}_{\pi} \left[x_1^\top Q x_1 + u_1^\top R u_1 - \tau \mathcal{H}_q(\pi_{K^+}(u_1|x_1)) \right. \\
&\quad \left. + \gamma J_K(x_2)  | x_0 = x \right] \\
&\geq \mathbb{E}_{\pi}\left[x_0^\top Q x_0 + u_0^\top R u_0 - \tau \mathcal{H}_q(\pi_{K}(u_0|x_0)) | x_0=x\right] \\
&\quad +\gamma \mathbb{E}_{\pi} \left[ \sum_{k=1}^{t} \gamma^{k-1} c\left(x_k, u_k \right) \Bigg| 
x_0=x \right] \\
&\quad + \gamma^{t+1} \mathbb{E}_{\pi}\left[{J}_{\pi_k}(x_{t+1})  | x_0=x \right] \\
&\vdots \\
&\geq \mathbb{E}_{\pi} \left[  x_0^\top Q x_0 + u_0^\top R u_0 - \tau \mathcal{H}_q(\pi_{K}(u_0|x_0)) | x_0=x \right] \\
&\quad + \gamma \mathbb{E}_{\pi} \left[ {J}_{\pi_{K^+}}(x_1) \right] \\ 
& = \mathcal{Q}_{K^+}(x,u),
\end{aligned}
$$
where $\gamma^{t+1}\mathbb{E}_{\pi}\left[J_{\pi_{K}}(x_{t+1}) | x_0=x \right]  \rightarrow 0$ when $t  \rightarrow0$.
\end{proof}
For the optimality of Tsallis entropy policy iteration, when $0<q<1$, we define the Tsallis policy iteration in an alternative way by applying (\ref{eq2.20}) and (\ref{original-K+}), then $\pi_K$ converges to the optimal policy.

\section{Data-Driven Implementation} \label{secion 3}
\subsection{Data-Related Matrices Construction with Least-Squares}
It is clear from (\ref{eqK}) that the optimal policy depends exclusively on the matrix $\Theta$. thereby avoiding any constraints imposed by the system parameters. The $\Theta$ matrix clearly encompasses the information regarding system dynamics. We choose to estimate $\mathcal{Q}_\pi$ directly from the data. We commence with an examination of data constraints, followed by the derivation of a model equation utilized for a least-squares estimator.

We assume access to samples of subsequent moments $\left\{\left(Z_i,X_{i+}\right)\right\}_{i=1}^N$, where
\begin{equation}
\begin{aligned}
X_{i+}= \begin{bmatrix}A+\Delta A_i+Cw_i & B+\Delta B_i+Dw_i\end{bmatrix}Z_i\\\begin{bmatrix}A+\Delta A_i+Cw_i & B+\Delta B_i+Dw_i\end{bmatrix}^\top.
\label{eqZ_i}
\end{aligned}
\end{equation}
Data can be generated by selecting certain $x_i$ and evaluating $x_{i+}$ using (\ref{eqp-2.1.3}). Subsequently, allow 
$$
Z_i=\begin{bmatrix}x_i \\ u_i\end{bmatrix}\cdot \begin{bmatrix}x_i^\top & u_i^\top\end{bmatrix} \mbox{ and } Z_{i+}=\begin{bmatrix}x_{i+} \\ u_{i+}\end{bmatrix}\cdot \begin{bmatrix}x_{i+}^\top & u_{i+}^\top\end{bmatrix}. 
$$
A comprehensive data-generation procedure is presented in Section \ref{secion 3}. We want to find $\mathcal{Q}_\pi$ such that $\mathcal{Q}_\pi(Z)=\left(\mathcal{F}_\pi \mathcal{Q}_\pi\right)(Z)$ for all $Z$. We sample $\mathcal{F}^i_\pi \mathcal{Q}_\pi$ by: 
\begin{equation}
\left(\mathcal{F}_\pi^i \mathcal{Q}_\pi\right)\left(Z_i\right)= \operatorname{Tr}\left(Z_iH\right) + const'. +  \gamma \mathcal{Q}_\pi(Z_{i+}),
\label{eq2.26}
\end{equation}
which we can use to construct a model equation:
$$
\mathcal{Q}_\pi\left(Z_i\right)=\left(\mathcal{F}_\pi^i \mathcal{Q}_\pi\right)\left(Z_i\right)+\left(\left(\mathcal{F}_\pi \mathcal{Q}_\pi\right)\left(Z_i\right)-\left(\mathcal{F}_\pi^i \mathcal{Q}_\pi\right)\left(Z_i\right)\right).
$$
Using the parametrization in (\ref{eq2.24}) 
results in:
\begin{equation}
\begin{aligned}
\operatorname{Tr}[\Theta_\pi Z_i] + const. &= \operatorname{Tr}[Z_iH] +  const'. + \gamma \operatorname{Tr}[\Theta_\pi Z_{i+}] \\
& \quad + \gamma \operatorname{Tr}[\Theta_\pi \left(\mathcal{M}\left(\mathcal{E}(Z_i) \right) - Z_{i+}\right)],
\end{aligned}
\label{eqLS}
\end{equation}
which is linear in the parameter $\Theta_\pi$ with zero-mean error. It is important to observe that (\ref{eqLS}) incorporates temporal differences, as discussed in \cite{bradtke1996linear}.

We reformulate the model utilizing the symmetric vectorization operator  $\operatorname{vec}_{\mathrm{s}}: \mathbb{R}^{m\times m} \rightarrow \mathbb{R}^{\frac{m(m+1)}{2}}$, which satisfies $\operatorname{Tr}[X Y]=\operatorname{vec}_{\mathrm{s}}(X)^{\top} \operatorname{vec}_{\mathrm{s}}(Y)$. We rewrite (\ref{eqLS}) as:
\begin{equation}
b_i=\left(a_i+e_i\right)^{\top} \theta_\pi
\label{eq2.31}
\end{equation}
with 
\begin{equation}
\begin{aligned}
\theta_\pi&=\operatorname{vec}_{\mathrm{s}}\left(\Theta_\pi\right), \\
b_i&= \operatorname{Tr}(HZ_i) + const'. - const., \\
a_i&=\operatorname{vec}_{\mathrm{s}}\left(Z_i - \gamma Z_{i+}\right),\\
e_i &= \gamma \operatorname{vec}_{\mathbf{s}}\left(Z_{i+} - \mathcal{M}\left( \mathcal{E}(Z_i) \right))\right).
\end{aligned}
\label{eq2.32}
\end{equation}
This is referred to as the error-in-variables setting. The issue arises from the linear dependence of both $a_i$ and $e_i$ on $w_i w_i^{\top}$ and $w_i$, as expressed through $Z_{i+}$. Therefore, $\mathbb{E}\left[a_i e_i^{\top}\right] \neq 0$, resulting in an inconsistency in the classical least-squares estimate.

This stands for the traditional rationale for the use of instrumental variables \cite{young2011recursive}. We employ $g_i:=\operatorname{vec}_{\mathbf{s}}(Z_i)$ as instrumental variables, following the approach outlined in \cite{bradtke1996linear}. This selection is independent of the error $e_i$ (i.e., $\mathbb{E}\left[g_i e_i^{\top}\right]=0$), while exhibiting correlation with $a_i+e_i$, which is a requisite characteristic of an instrumental variable. Conceptually, $g_i$ represents the optimal estimate in the absence of supplementary information. 

The estimate using the instrumental variables is
\begin{equation}
\hat{\theta}_\pi=\left(\sum_{i=1}^N g_i a_i^{\top}\right)^{-1}\left(\sum_{i=1}^N g_i b_i\right).
\label{eq2.33}
\end{equation}
Alternatively we can use the recursive algorithm:
\begin{equation}
\begin{aligned}
\hat{\theta}_{\pi, i} & =\hat{\theta}_{\pi, i-1}+L_i\left[b_i-a_i^{\top} \hat{\theta}_{\pi, i-1}\right], \\
L_i & =S_{\pi, i-1} g_i /\left(1+a_i^{\top} S_{\pi, i-1} g_i\right), \\
S_{\pi, i} & =S_{\pi, i-1}-\left(L_i a_i^{\top}\right) S_{\pi, i-1},
\end{aligned}
\label{eq2.34}
\end{equation}
for $i=1, \cdots, N$ and where the initialization $\hat{\theta}_{\pi, 0} \in \mathbb{R}^{\frac{(m+n)(m+n+1)}{2}}$ and $S_{\pi, 0} \in \mathbb{R}^{\frac{(m+n)(m+n+1)}{2} \times \frac{(m+n)(m+n+1)}{2}}$ represent an initial estimate for $\theta_\pi$ associated confidence in that estimate, respectively.

\subsection{Dataset Persistent Excitation (PE) condition}\label{sec3.1}
The process of data generation is outlined as follows. To derive the algorithms, we regard them as iteratively updating the policy. Each iteration employs $M$ trajectories of length $T$ for a total of $N= MT$ samples, yielding enough new data points that fulfill (\ref{eqZ_i}). Parallel simulations are conducted for $j=1, \cdots, M$ and $t=0, \cdots, T$ as $T \rightarrow \infty$:
$$
x_{t+1}^j=(A+\Delta A_t^j) x_t^j+(B+\Delta B_t^j) u_t^j+ (C x_t^j + D u_t^j) w_t^j
$$
and let $z_t^j=\begin{bmatrix}x_t^j \\ u_t^j\end{bmatrix}$ for
\begin{equation}
u_t^j=-K x_t^j+\nu_t^j, \quad \nu \sim \mathcal{N}_q(0, \Sigma),
\label{eq3.1}
\end{equation}
where $\nu_t^j$ is uniformly distributed over $\left\{\nu:\|\nu\|_2 \leq r_\nu\right\}$. We sample $\nu_t^j$  from a ball to prevent over-excitation of the system, while ensuring that the trajectories remain sufficiently informative.

The functional form of the dynamics is unnecessary as only states and inputs are collected. A simulator or experiments are sufficient. Each trajectory must be initialized at the beginning of a new controller iteration. We can either proceed with trajectories from the prior iteration, or we can sample $\left\{x_0^j\right\}_{j=1}^M$ uniformly from the set $\left\{x:\|x\|_2 \leq r_x\right\}$. We estimate the snapshots \(X_t^i\) and \(Z_t^i\) for each augmented state vector through the application of an outer product. In cases where the specific trajectory does not influence the identification procedure, such as in policy iteration and system identification, we denote the pairs $\left(Z_t^j, X_{t+1}^j\right)$ using the notation $\left(Z_i, X_{i+}\right)$, with $t$ and $j$ varying within their respective ranges. All these pairs satisfy (\ref{eqZ_i}).

\subsection{Policy Iteration Algorithm}
We can now explain the complete approximate policy iteration algorithm. The term "generate pair" pertains to the rollouts outlined in the preceding section.
\begin{algorithm}
\caption{Approximate Policy Iteration}
\textbf{Input:} Initial guess $\hat{\theta}_{\pi_0}$, number of samples $N$, and confidence parameter $\beta_0$. \\
\textbf{Initialize:} $S_{\pi_0} = \beta_0 I$.
\begin{algorithmic}[1]
\For{$t = 1$ to $\infty$}
    \State $K_t = -\Theta_{uu}^{-1} \Theta_{ux}$, with $\Theta = \text{unvec}_\text{s}\left(\hat{\theta}_{\pi_{t-1}}\right)$.
    \State Let $\hat{\theta}_{\pi_t, 0} = \hat{\theta}_{\pi_{t-1}}$ and $S_{\pi_t, 0} = S_{\pi_{t-1}}$.
    \For{$i = 1$ to $N$}
        \State Generate pair $\left(Z_i, X_{i+}\right)$ and find $Z_{i+}$. 
        \State Update $\hat{\theta}_{\pi_t, i}$ and $S_{\pi_t, i}$ using Equation (\ref{eq2.34}).
    \EndFor
    \State Set $\hat{\theta}_{\pi_t} = \hat{\theta}_{\pi_t, N}$ and $S_{\pi_t} = S_{\pi_t, N}$.
\EndFor
\end{algorithmic}
\label{alg1}
\end{algorithm}
Our approach allows the scheme to continuously enhance its estimation of the Q-function, as evidenced by the numerical experiments. 

\section{Numerical Examples}
\subsection{Application: Mean-Variance Portfolio Selection}
Consider an investor who enters the capital market with initial wealth $x_0$ and invests simultaneously in one riskless asset and $k$ risky assets with an finite time horizon. Let $r_t$ be a given deterministic return of the riskless asset at time period $t$ and $e_t=\left(e_t^1, \cdots, e_t^k\right)^\top$ the vector of random returns of the $k$ risky assets at period $t$. We assume that vectors $e_t, t=0,1, \cdots, T$, are statistically independent and the only information known about the random return vector $e_t$ is its first two moments: its mean $\mathbb{E}\left(e_t\right)=\left(\mathbb{E} e_t^1, \mathbb{E} e_t^2, \cdots, \mathbb{E} e_t^k\right)^\top$ and its covariance $\operatorname{Cov}\left(e_t\right)=\mathbb{E}\left[\left(e_t-\mathbb{E} e_t\right)\left(e_t-\mathbb{E} e_t\right)^\top\right]$. Clearly, $\operatorname{Cov}\left(e_t\right)$ is nonnegative definite, i.e., $\operatorname{Cov}\left(e_t\right) \geq 0$.  

Inspired by the foundational work of \cite{markowitz1952portfolio}, and its extensions in \cite{cui2014optimal} and \cite{wu2018explicit}, we study a multi-period mean-variance (MV) portfolio optimization problem over an infinite horizon. In such cases, it is more realistic to evaluate cumulative risk over time, incorporating ongoing risk aversion and sustained portfolio performance, rather than focusing on a terminal outcome. To proceed, let $x_t$ be the wealth of the investor at the beginning of the $t$-th period, and let $u_t^i, i=1,2, \cdots, k$, be the amount invested in the $i$-th risky asset at period $t$, and $x_t$ be the wealth level in period $t$.
\begin{subequations}   
\begin{align}
\mbox { (MV-$\infty_1$) :}\quad &\min_{u_t} 
\sum_{t=0}^\infty \operatorname{Var}  \left[x_t\right]+ \mathbb{E}\left[\sum_{t=0}^\infty u_t^{\top} R u_t\right] \label{main_func_inf} \\
&\mbox { s.t. } \mathbb{E}\left[x_t\right]=d_t, \mbox{ and } \label{target_d_inf}\\
& \qquad \  x_{t+1}=r_t x_t+{P}_t {u}_t,  \label{eqeg.1b_inf}
\end{align}
\label{MV_inf}
\end{subequations}
for $t=0, \cdots, \infty$. Here ${u}_t \triangleq \left(u_t^1, u_t^2, \cdots, u_t^k\right)^{\top}$ and ${P}_t \triangleq \left(P_t^1, P_t^2, \cdots, P_t^k\right)=$ $\left(e_t^1-r_t, e_t^2-r_t, \cdots, e_t^k-r_t\right)$ is the excess return vector. $d_t$ is a predetermined price level for the expected wealth $\mathbb{E}[x_t]$ at time $t$, with $d_t \geq x_0 \prod_{k=0}^{t} r_k$, which requires that the target wealth level must exceed the wealth level obtained from fully investing in the risk-free asset. The variance sum of $x_t$ is minimized subject to the projected total wealth. $u_t^\top R u_t$ serves as a penalty term for investing in the risky asset, indicating the investor's desire to regulate the proportion of their wealth allocated to such assets. 

Consider an Lagrange function by introducing Lagrangian multiplier $2\lambda \in \mathbb{R}$ for (\ref{target_d_inf}):
$$
\begin{aligned} 
&\quad \  \sum_{t=0}^\infty \operatorname{Var}  \left[x_t\right]+2 \lambda\left(\sum_{t=0}^\infty \mathbb{E}\left[x_t\right]-d_t\right)+ \mathbb{E} \left[ \sum_{t=0}^{\infty}  u_t^{\top} R u_t\right] \\ 
&=\mathbb{E}\left[\sum_{t=0}^\infty \left(x_t-d_t\right)^2+2 \lambda\left(x_t-d_t\right) \right]+ \mathbb{E}\left[\sum_{t=0}^{\infty} u_t^{\top} R u_t\right],
\end{aligned}
$$
where $\operatorname{Var}\left[x_t\right] = \mathbb{E}\left[x_t^2\right]-\mathbb{E}\left[x_t\right]^2 = \mathbb{E}\left[\left(x_t-d_t\right)^2\right]$. This further leads to following equivalent problem (MV-$\lambda_1$):
\begin{equation}
\begin{aligned}
\mbox { (MV-$\lambda_1$) :}\quad &\min_{u_t} 
\mathbb{E}\left[ \sum_{t=0}^\infty \left(x_t-\left(d_t-\lambda\right)\right)^2 + u_t^{\top} R u_t\right] -\lambda^2 \\
&\mbox { s.t. } x_{t+1}=r_t x_t+{P}_t {u}_t, \quad t=0, \cdots, \infty.
\end{aligned}
\label{MV-L1}
\end{equation}
The aforementioned auxiliary problem can be reformulated as a separable linear quadratic control problem (MV-$\lambda_2$), by replacing the state variable $x_t$ with $y_t + d_t-\lambda$:
\begin{equation}
\begin{aligned}
\mbox { (MV-$\lambda_2$) :}\quad &\min_{u_t} 
\mathbb{E}\left[ \sum_{t=0}^\infty y_t^2 +  u_t^{\top} R u_t\right]\\
&\mbox { s.t. } y_{t+1}=r_t y_t+{P}_t {u}_t, \quad t=0, \cdots, \infty.
\end{aligned}
\label{MV-L2}
\end{equation}
Time-inconsistency often arises in dynamic decision-making when future preferences misalign with current plans, a phenomenon likened to the “random shoe” behavior of Wall Street \cite{guo2017quantitative}. The MV problem is inherently time-inconsistent due to its nonlinear dependence on expected terminal wealth, which breaks dynamic programming principles \cite{bjork2010general}. While time-consistent strategies can be constructed in finite-horizon settings \cite{ni2017time}, our infinite-horizon analysis adopts a precommitment solution, with results based on simulations rather than real financial data.

\subsection{Model Formulation}
This section studies the infinite-horizon MV problem with Tsallis entropy regularization in the objective function:
\begin{equation}
\min_{u_t}  \mathbb{E}_\pi\Biggl[\sum_{t=0}^{\infty} \gamma^t \biggl(x_t^2 + u_t^{\top} R u_t  + \frac{\tau}{2-q} \left(\log_q \pi (u_t | x_t) -1\right) \biggr) \Bigg| x_0=x\Biggr].
\label{egeq.add2}
\end{equation}
The discount factor $\gamma$ reflects the time value of money or utility and to remain the total cost finite. Then, we shall transform (\ref{eqeg.1b_inf}) into a linear controlled system of form (\ref{eqp-2.1.3}), by which the general theory in above sections will work. Precisely, we define
$$
\left\{\begin{array}{l}
w_{i, t}=e_t^i-\mathbb{E}\left(e_t^i\right), \\
D_{i, t}=(0, \cdots, 0,1,0, \cdots, 0), \\
i=1, \cdots, k, t=0,1, \cdots, \infty,
\end{array}\right.
$$
which leads to 
\begin{equation}
x_{t+1}=\left(r_t x_t+\mathbb{E}(P_t) u_t\right)+\sum_{i=1}^n D_{i, t} u_t w_{i, t}.
\label{eqeg.2}
\end{equation}
It is not hard to see that this MV problem is just a special case of Section \ref{LQmodel} by letting $x_t \in \mathbb{R}$, $u_t \in \mathbb{R}^n$, $C=0$, $Q = 1 (\in \mathbb{R})$, $R \in \mathbb{R}^{n \times n}$, $A=r_t$, $B=\mathbb{E}(P_t)$ and $\Delta A_t = \Delta B_t = 0$, for $t=0, \cdots$.\\
\textbf{Setup.} (1) Parameters: $n=1$, $k=3$, $q=0.8$ for Tsallis entropy. The matrices $A$, $B$, $R$, and $W$ are defined as follows: $A=1.057$ (set to reflect a risk-free return rate of 5.7\%), $B = \begin{bmatrix} 0.21 & 0.28 & 0.22 \end{bmatrix}$, and $W = 0.99$. $R$ is generated to be a very small positive definite matrix, with each entries from 0 to 0.1. The regularization parameter $\tau$ is set to $0.7$. (2) We begin with a setup optimized for offline policy iteration (PI), where data is generated as prescribed by Algorithm \ref{alg1}. Subsequently, policy iteration is evaluated in an online policy setting.

\begin{figure}[h]
\centering
\includegraphics[width=0.8\textwidth]{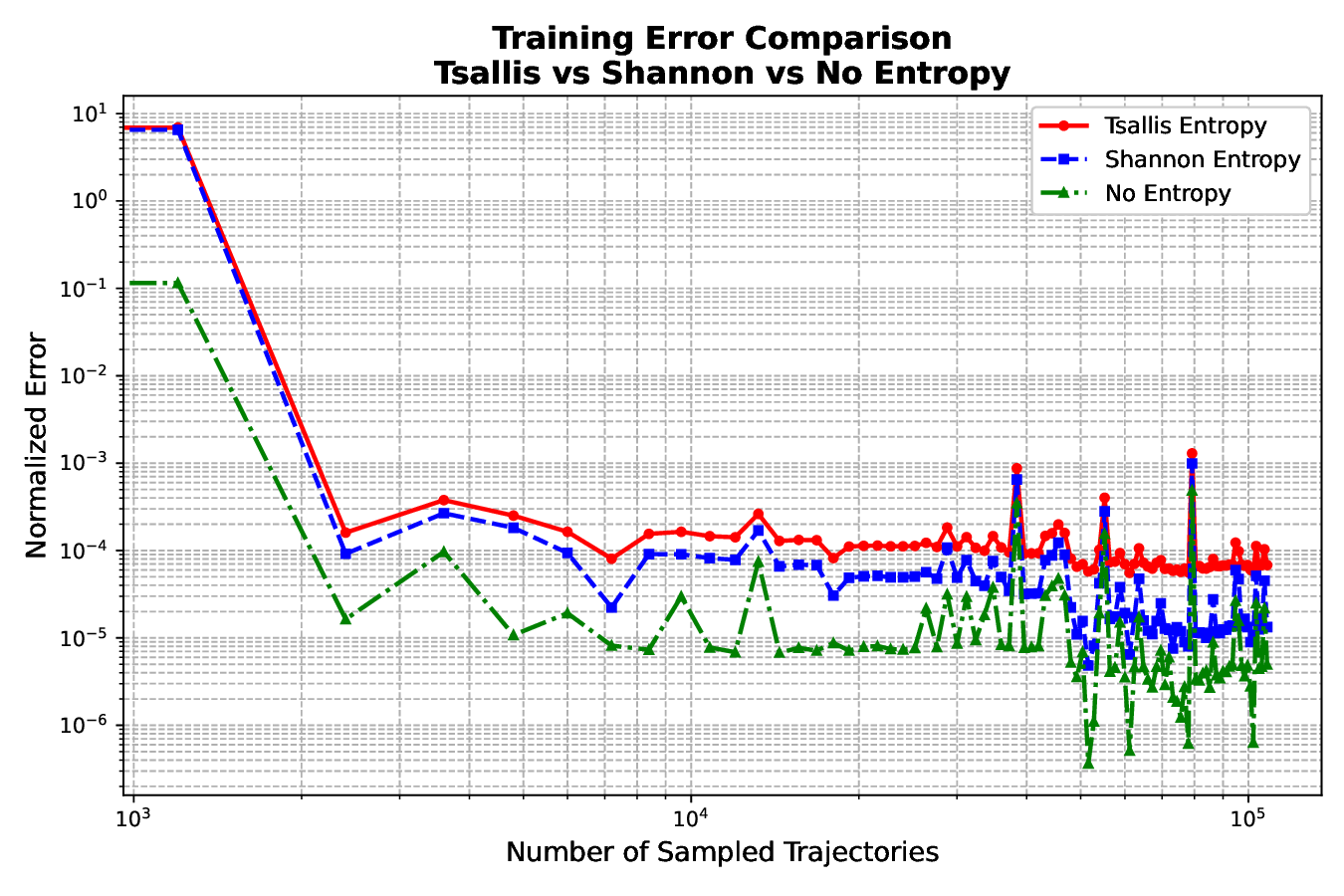}
\caption{Performance of offline policy iteration}
\label{fig1}
\end{figure}

\subsection{Offline and Online Policy Iteration}
We initially examine the influence of the number of rollouts on the effectiveness of our policy iteration method. According to the notation defined in section \ref{sec3.1}, the additive noise radius is set to $r_\nu=0.8$. Each policy iteration update employs $N=1200$ trajectories, initiated from states with a distribution parameter of $r_x=1$. A constant, stabilizing control gain $K_0$ is utilized to generate the samples. We set the initial value of policy iteration as $\hat{\theta}_{\pi_0} \in \mathbb{R}^{(1+k)(1+k+1)/2}$ with all entries are 0.8 and $\beta_0=20$, unless specified differently. The initial Q-function's optimal controller is represented by $K_0$, which ensures stability. 

In Figure \ref{fig1}, we compare the convergence behavior of three policy iteration variants: Tsallis entropy, Shannon entropy, and no entropy. The Tsallis exhibits smoother and more stable convergence, while the no-entropy case suffers from the largest fluctuations, likely due to sensitivity to sampling noise. Although all three methods reach normalized errors within the $10^{-4}$ to $10^{-5}$ range, their convergence dynamics differ: entropy-regularized methods reduce variance at the cost of slower early progress. The Tsallis and Shannon cases, using $q=0.8$ and $q=1$ respectively, converge at similar rates, highlighting the stabilizing role of entropy regularization.

\begin{figure}[htbp]
\centering
\begin{subfigure}[t]{0.45\columnwidth}
    \centering
    \includegraphics[width=\linewidth]{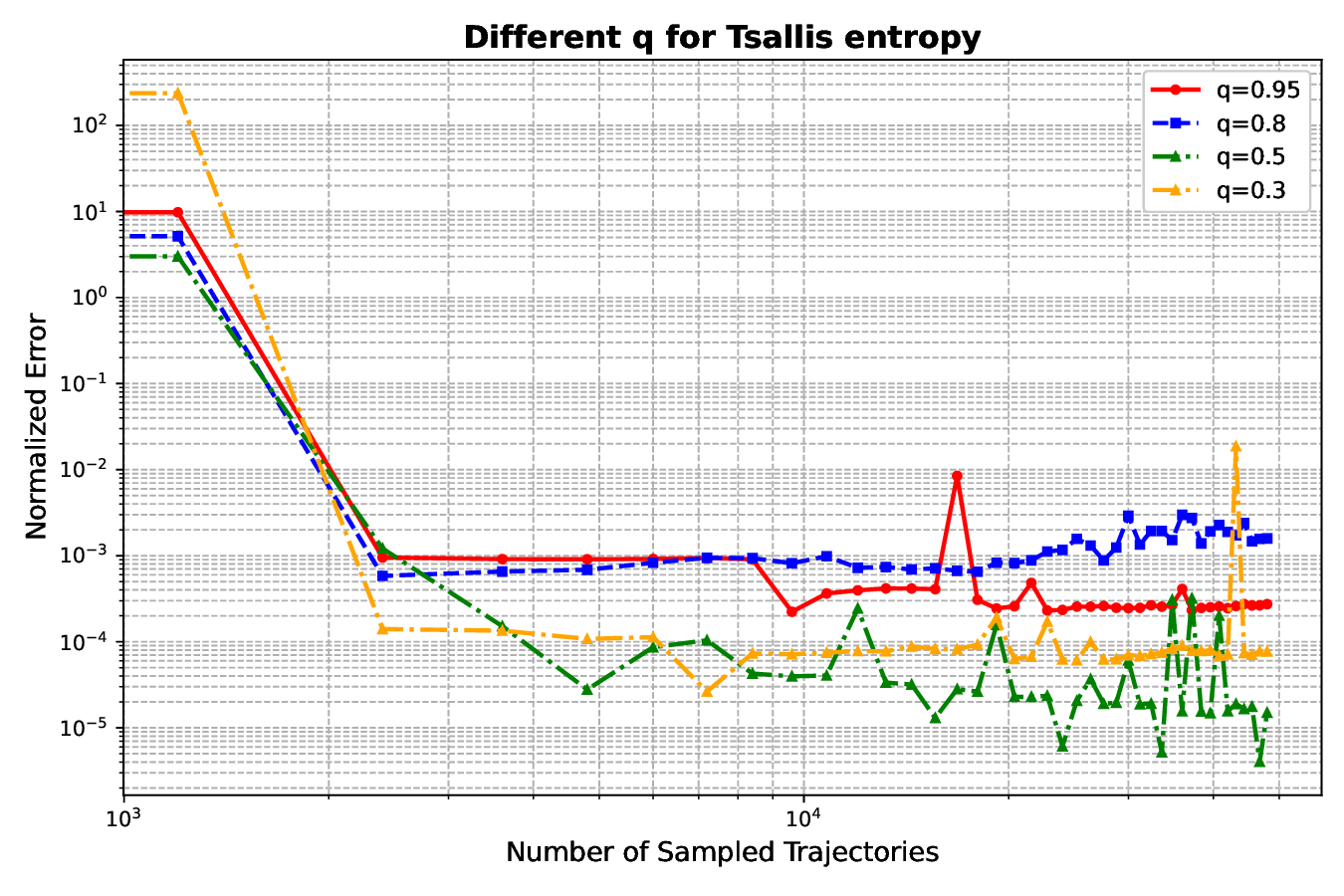}
    \caption{Different $q$ values}
    \label{fig:q_comparison}
\end{subfigure}%
\hspace{0.02\columnwidth}%
\begin{subfigure}[t]{0.45\columnwidth}
    \centering
    \includegraphics[width=\linewidth]{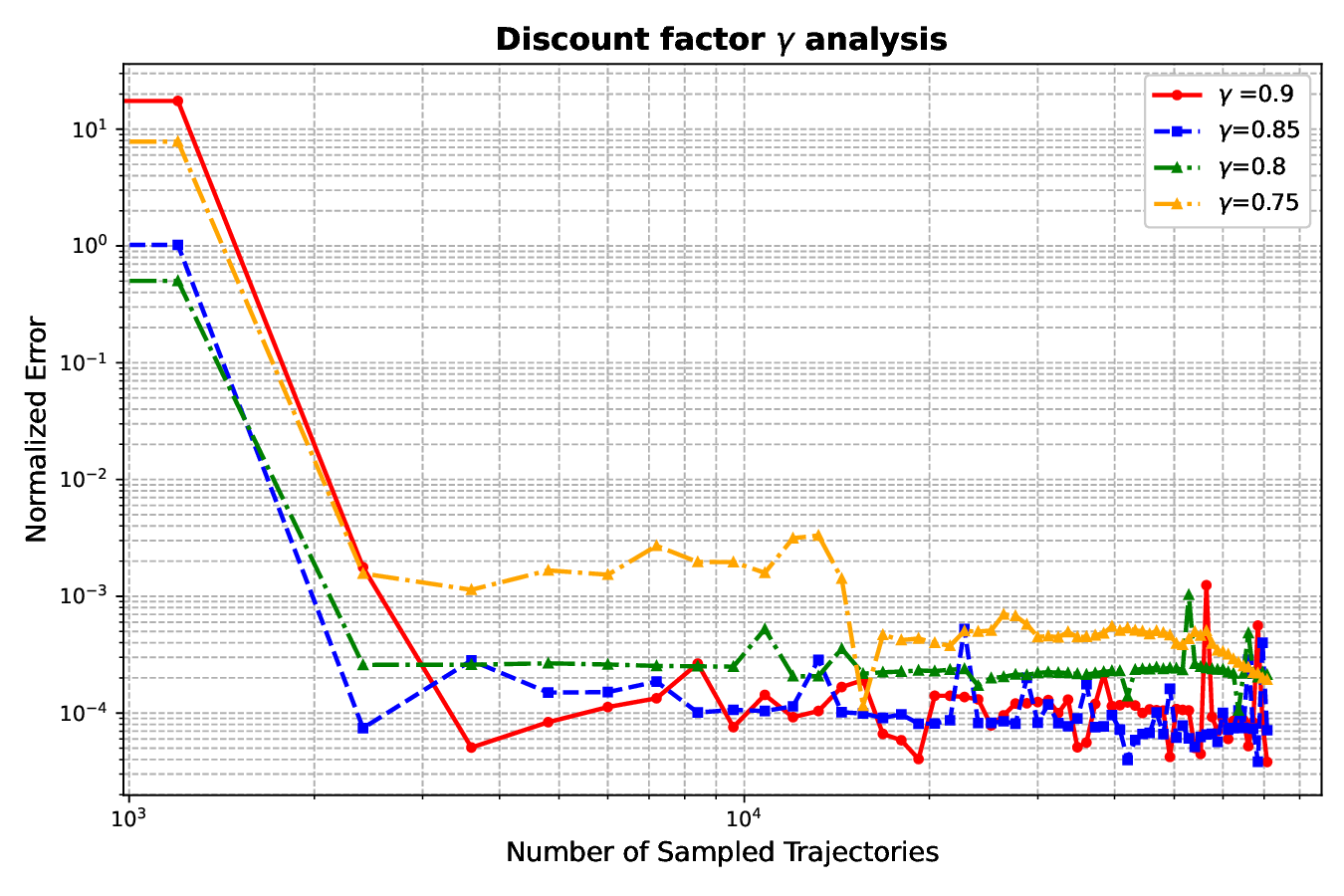}
    \caption{Different $\gamma$ values}
    \label{fig:gamma_comparison}
\end{subfigure}
\caption{Convergence under varying parameters}
\label{fig2}
\end{figure}

Figure \ref{fig:q_comparison} explores the impact of different Tsallis entropy parameters $q \in [0.3, 0.95]$ on convergence. While no strict monotonic relationship emerges, smaller $q$ values tend to accelerate convergence but introduce greater fluctuations. In contrast, larger $q$ values promote smoother but slower convergence. Though our analysis assumes $q<1$, comparing to $q>1$ could provide further insight into the balance between exploration and regularization.

Figure \ref{fig:gamma_comparison} presents convergence under Tsallis entropy with varying discount factors $\gamma=0.75,0.80,0.85,0.90$. Smaller $\gamma$ values lead to faster convergence but higher asymptotic errors, whereas larger $\gamma$ induces slower and more fluctuating learning but eventually achieves lower final error. This illustrates a classic reinforcement learning trade-off: short-term reward bias improves stability and speed, while long-term planning may yield better policies at the cost of convergence smoothness.

\begin{figure}[h]
\centering
\includegraphics[width=0.8\textwidth]{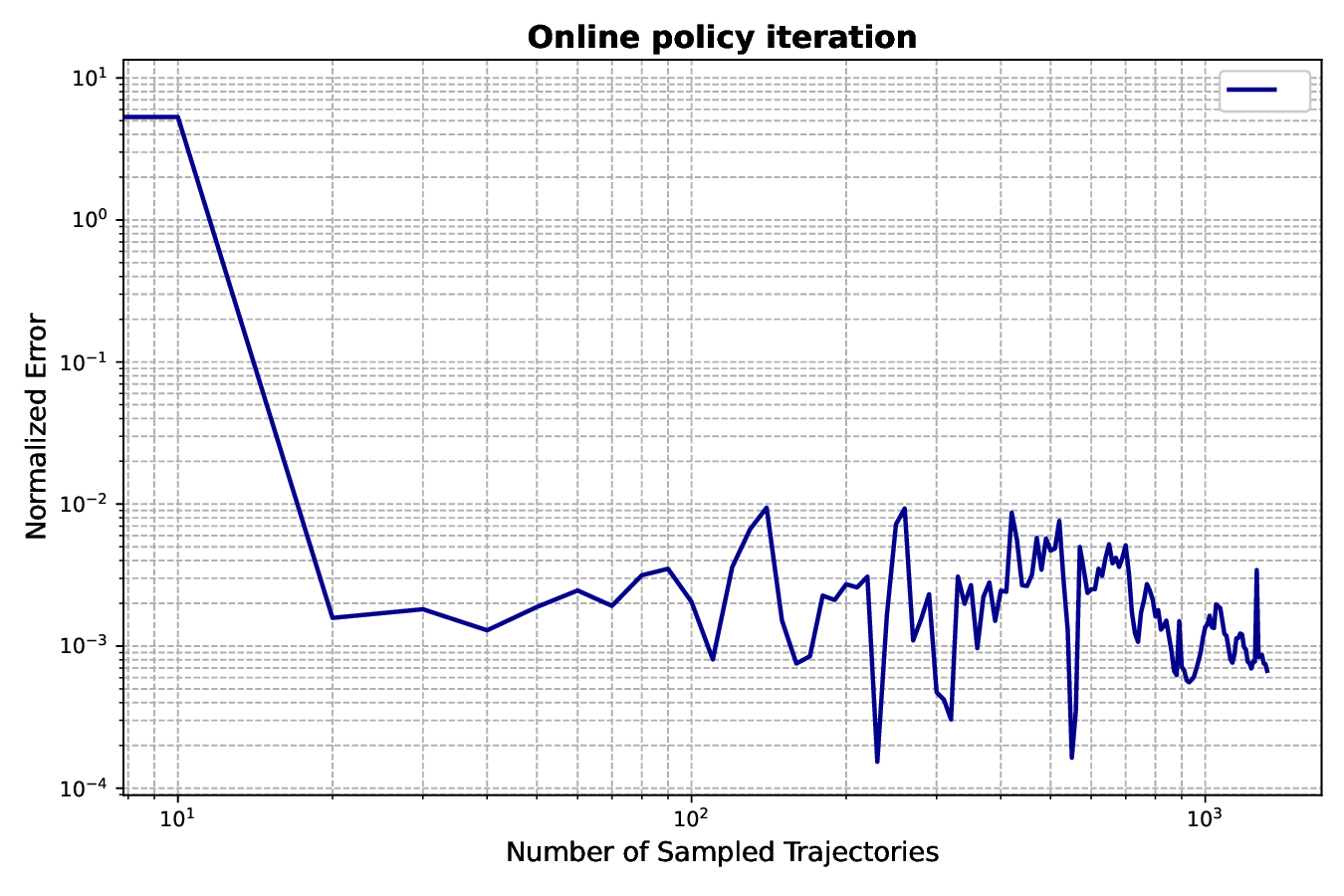}
\caption{Performance of online policy iteration}
\label{fig-online}
\end{figure}

In the online policy iteration setting, parameters are updated in real-time using freshly sampled data per iteration, unlike the offline method which relies on a fixed dataset. Using the same sample size $N$ and initialization $\beta_0$, the online approach reaches the convergence region more quickly due to its adaptive nature, though it experiences greater fluctuation. Nonetheless, both settings achieve comparable final error levels, confirming the robustness of the online scheme despite its increased variance.

\section{Conclusions}
This paper develops a novel reinforcement learning framework for the LQ control problem under multiplicative noise, incorporating Tsallis entropy as a regularization mechanism. By extending classical entropy-regularized control methods to the Tsallis setting, we offer a tunable approach to balance exploration and sparsity in the control strategy. We formulate a policy iteration scheme grounded in Q-functions, derive explicit expressions for the optimal policy, and prove the convergence of the proposed algorithm.

For cases with unknown system dynamics, we propose a fully data-driven variant using instrumental-variable-based least-squares estimation, enabling stable and efficient policy updates without prior knowledge of system parameters. Our method demonstrates both theoretical soundness and practical potential, filling a critical gap in the literature on entropy-regularized stochastic control under multiplicative uncertainty. These results lay the groundwork for future applications of generalized entropy methods in robust, model-free control for high-dimensional and uncertain systems.

\end{document}